\patchcmd{\thebibliography}{\section*}{\section}{}{}
\newcommand{\1}{\mathds 1}
\newcommand{\F}{\mathscr F}
\newcommand{\vep}{{\varepsilon}}
\newcommand{\ms}{\mathscr}
\renewcommand{\P}{{\mathbb P}}
\newcommand{\E}{{\mathbb E}}
\newcommand{\R}{{\Bbb R}}
\newcommand{\Id}{{\rm Id}}
\newcommand{\tr}{{\sf tr}}
\newcommand{\defeq}{{\stackrel{\rm def}{=}}}
\newcommand{\M}{{M}}
\renewenvironment{proof}[1][\proofname]{\noindent {\bfseries #1.}\;}{\hfill\ensuremath{\blacksquare}\\}
\renewcommand{\d}{{\mathrm d}}
\newcommand{\e}{{\mathrm e}}
\newtheoremstyle{slantthm}{10pt}{10pt}{\slshape}{}{\bfseries}{}{.5em}{\thmname{#1}\thmnumber{ #2}\thmnote{ (#3)}.}
\newtheoremstyle{slantrmk}{10pt}{10pt}{\rmfamily}{}{\bfseries}{}{.5em}{\thmname{#1}\thmnumber{ #2}\thmnote{ (#3)}.}
\begin{document}
\theoremstyle{slantthm}
\newtheorem{thm}{Theorem}[section]
\newtheorem{prop}[thm]{Proposition}
\newtheorem{lem}[thm]{Lemma}
\newtheorem{cor}[thm]{Corollary}
\newtheorem{defi}[thm]{Definition}
\newtheorem{disc}[thm]{Discussion}
\newtheorem*{nota}{Notation}
\newtheorem{conj}[thm]{Conjecture}
\newtheorem*{mr}{Main Result (Informal statement)}

\theoremstyle{slantrmk}
\newtheorem{ass}[thm]{Assumption}
\newtheorem{rmk}[thm]{Remark}
\newtheorem{eg}[thm]{Example}
\newtheorem{que}[thm]{Question}
\numberwithin{equation}{section}
\newtheorem{quest}[thm]{Quest}
\newtheorem{prob}[thm]{Problem}

\title{\bf Precise asymptotics of some meeting times arising from the voter model on large random regular graphs\thanks{Support from the Simons Foundation before the author's present position and from the Natural Science and Engineering Research Council of Canada is gratefully acknowledged.
}}

\author{Yu-Ting Chen\footnote{Department of Mathematics and Statistics, University of Victoria, British Columbia, Canada.} \footnote{Email: \url{chenyuting@uvic.ca}}}

\date{\today}

\maketitle
\abstract{We consider two independent stationary random walks on large random regular graphs of degree $k\geq 3$ with $N$ vertices. On these graphs, the exponential approximations of the meeting times are known to follow from existing methods and form a basis for the voter model's diffusion approximations. The main result of this note improves the exponential approximations to an explicit form such that the first moments are asymptotically equivalent to  $N(k-1)/[2(k-2)]$.\medskip 

\noindent\emph{Keywords:} Meeting times, the voter model, Kemeny's constant, the Kesten--McKay law, spectra gaps of random regular graphs
\medskip

\noindent\emph{Mathematics Subject Classification (2000):} 60J27, 60K35, 60F99 
 
}

\section{Introduction}\label{sec:intro}
This note is concerned with the meeting time $M=\inf\{t\geq 0;X_t=Y_t\}$ of i.i.d. continuous-time, rate-$1$ irreducible Markov chains $X$ and $Y$ defined on a large finite set; the chains are subject to the stationary initial conditions. These basic stopping times $M$'s arise in a series of studies of diffusion approximations of the voter model and some closely related interacting particle systems \cite{Cox_1989, Oliveira, CCC, CC, C}. In this context, the first moments of the meeting times are the time changes for diffusion approximations of the particle systems, but they encode the underlying Markov transition kernels implicitly. Due to this connection, various non-rigorous, explicit approximations from the physics literature (cf. \cite{SR,VE}) for the particle systems  may be translated to precise asymptotics of the meeting times or related objects, but very few are mathematically proven. 

Our primary interest in this note is to establish a new example for the precise asymptotics of the first moments of the meeting times. We consider the meeting times of random walks on large random regular $k$-graphs, for any fixed degree $k\geq 3$. (See Section~\ref{sec:rrg} for the definition of these graphs.) The possibility of proving the precise asymptotics is suggested by  a result in \cite{OHLN}  for an evolutionary game model that can be identified as a weak perturbation of the voter model. In terms of time changes for diffusion approximations, a comparison of \cite{OHLN} with the established mathematical result [see also Remark~\ref{rmk:rrg} (2)] shows that the following limit should hold:
\begin{align}\label{rrg}
\E_G[M]\sim \frac{N(k-1)}{2(k-2)}\quad \mbox{as }N\to\infty.
\end{align}
Here, $\E_G[M]$ is defined on the random $k$-regular graph $G$ with $N$ vertices, and $a_N\sim b_N$ if $a_N/b_N\to 1$. See \cite{C}, especially Section 4.3 there, for the background of (\ref{rrg}). 

Relative to the precise asymptotics of $\E_G[M]$, the convergence of the normalized times $M/\E_G[M]$ is known. It is just one particular case of the exponential approximations of hitting times of small sets by stationary Markov chains which are valid under mild conditions \cite{Aldous:AE,AB,AF}. (Bounds of the expected hitting times are also obtained there.) Specialized to the meeting times on large random regular graphs, the conditions in \cite{Aldous:AE,AB,AF} essentially require that the first moments $\E_G[M]$ grow more rapidly than the times for the associated bivariate Markov chains $(X_t,Y_t)$ to mix. Then the limit of $M/\E_G[M]$ is the exponential random variable with a mean one. See the proofs in \cite[Section~6]{CCC} and \cite[Section~4.3]{C} for verifications of these conditions in general. 

In the different direction of representing distributions explicitly on finite sets other than random regular graphs, the meeting times are known to be reducible to hitting times of points. The weakest exact symmetry known to us for this reduction is  $\P(X_t=x|X_0=x)$  independent of $x$ for all $t$  \cite[Section~14.2]{AF}. In this case, the first moments of the meeting times (again by stationary chains) can also be expressed explicitly as sums of elementary functions of \emph{all} the eigenvalues, known as Kemeny's constant. See Section~\ref{sec:spec} and the eigentime identity in \cite{AF}. Hence, the exact symmetry gives details of the meeting time distributions much more than those from the exponential approximations by quite different methods. In particular, it allows for explicit formulas of the first moments.

In terms of this background, it is unclear whether the general methods in \cite{Aldous:AE,AB,AF} for exponential approximations are enough to obtain the explicit asymptotics in \eqref{rrg}. See also the end of Section~\ref{sec:rrg}. By extending the method for proving Kemeny's constant, the main result establishes \eqref{rrg} and improves the exponential approximations accordingly. 

\begin{mr}
\sl Given a fixed integer $k\geq 3$, let $G$ be the $k$-random regular graphs on $N$ vertices and $M$ the meeting time of two independent stationary random walks on $G$. Then, in the sense of convergence in distribution and convergence of all moments, $M/N\to(k-1)\mathbf e/[2(k-2)]$ as $N\to\infty$, where  $\mathbf e$ denotes an exponential random variable with $\E[\mathbf e]=1$.
\end{mr}

The proof of this result begins with the well-known property that the infinite $k$-regular tree is the limit of  large random $k$-regular graphs \cite{McKay, BS}. The symmetry of this infinite graph is applied in several crucial ways to extend by approximations a spectral method in the spirit of proving Kemeny's constant. After all,  although the exact symmetry leading to Kemeny's constant is violated on random regular graphs, the asymmetry is mild, and an analog of the reduction to hitting times of points mentioned above still applies to the infinite tree. This property suggests that the tree is a reference geometry for approximations. Nevertheless, the critical issue is whether the mild asymmetry breaking the exact relation to Kemeny's constant also has \emph{comparable mild effects} on the meeting time distributions. A key step of the proof verifies this relation in general by showing that the stationary distributions dominate the meeting location distributions. See Proposition~\ref{prop:recursion}. The precise limit in \eqref{rrg} is then determined by applying the Kesten--McKay law for the spectral measure of the $k$-regular tree. Here, we consider an analog of the eigentime identity to extend Kemeny's constant to the infinite tree. \smallskip 

\noindent {\bf Organization of this {note}.} The proof of the main theorem is given in Section~\ref{sec:rrg}. Sections~\ref{sec:spec}, \ref{sec:prob} and~\ref{sec:appendix} present some auxiliary results for general finite Markov chains.\smallskip

\section{Spectral representation}\label{sec:spec}
We begin with the basic setup of Markov chains. Let $Q$ be an irreducible, reversible transition kernel defined on a finite set $E$ with $\#E=N$. Assume that $Q$ has a zero trace: $\sum_x Q(x,x)=0$. Let $\{X^x,Y^x; x\in E\}$ be a family of independent (rate-$1$) $(E,Q)$-Markov chains with $X^x_0=Y^x_0=x$.  By functional calculus (Section~\ref{sec:appendix}), $\P(X^x_t=z)=\e^{t(Q-1)}(x,z)$ for all $x,z$. Since $E$ is finite and the irreducibility of $Q$ implies the irreducibility of a product of two $Q$-chains, a standard result of Markov chains ensures that $(X^x,Y^y)$ hits the diagonal $\{(z,z);z\in E\}$ a.s., that is, $X^x$ and $Y^y$ meet a.s. Hence, $M_{x,y}=\inf\{t\geq 0;X_t^x=Y^y_t\}$ is finite a.s. With $\pi$ denoting by the unique stationary distribution of $Q$, we write $M=M_{U,U'}$ for $(U,U')$ distributed as $\pi\otimes \pi$ and independent of all $X^x$ and $Y^y$. 

The following lemma is the starting point of this note to study the distribution of $M$. It is followed by a classical connection to the spectrum of $Q$ [see (\ref{norm_trace})] which the method in the next section aims to extend. 

\begin{lem}
{\sl
For all $\lambda\in (0,\infty)$, 
\begin{align}\label{G-rec}
\frac{1}{\lambda}\sum_{z\in E}\pi(z)^2=\E\big[\e^{-\lambda \M}G_{\lambda}\big(X^U_\M,X^{U}_\M\big)\big],
\end{align}
where $G_\lambda$ is the Green function defined by  
\begin{align}\label{def:Glambda}
G_\lambda (x,y)\stackrel{\rm def}{=}\int_0^\infty \e^{-\lambda t}\P(X^{x}_t=Y^{y}_t)\d t,\quad x,y\in E,\;\lambda\in (0,\infty).
\end{align}
}
\end{lem}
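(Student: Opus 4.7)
The plan is to evaluate the Laplace transform
\begin{align*}
I(\lambda)\defeq \int_0^\infty e^{-\lambda t}\,\P(X_t^U=Y_t^{U'})\,\d t
\end{align*}
in two different ways: a direct route using stationarity will produce the left-hand side of (\ref{G-rec}), while conditioning on the first meeting time $\M$ and using the strong Markov property will produce the right-hand side.

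First, I would exploit stationarity. Since $\pi$ is stationary and $U,U'$ are independent with law $\pi$, for every $t\geq 0$ the pair $(X_t^U,Y_t^{U'})$ has distribution $\pi\otimes \pi$, so $\P(X_t^U=Y_t^{U'})=\sum_z\pi(z)^2$. This quantity is constant in $t$, and hence $I(\lambda)=\lambda^{-1}\sum_z\pi(z)^2$, matching the left-hand side.

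Next, I would apply Fubini to rewrite $I(\lambda)=\E\int_0^\infty e^{-\lambda t}\,\1_{\{X_t^U=Y_t^{U'}\}}\,\d t$. By the very definition of $\M$, the integrand vanishes on $[0,\M)$, so the integral collapses to its tail on $[\M,\infty)$. The pair $(X^U,Y^{U'})$ is an irreducible Markov chain on $E\times E$ (as a product of independent $Q$-chains), and $\M$ is the hitting time of its diagonal, hence a stopping time for the natural filtration $(\F_t)_{t\geq 0}$. Applying the strong Markov property at $\M$, conditionally on $\F_\M$ the shifted pair $(X_{\M+s}^U,Y_{\M+s}^{U'})_{s\geq 0}$ has the law of two fresh independent $(E,Q)$-chains issued from the common point $Z\defeq X_\M^U=Y_\M^{U'}$. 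A change of variables $t=\M+s$ in the tail integral, together with the definition (\ref{def:Glambda}) of $G_\lambda$, then yields
\begin{align*}
\E\Bigl[\int_\M^\infty e^{-\lambda t}\,\1_{\{X_t^U=Y_t^{U'}\}}\,\d t\,\Big|\,\F_\M\Bigr]=e^{-\lambda \M}\,G_\lambda(Z,Z),
\end{align*}
and taking the outer expectation produces the right-hand side of (\ref{G-rec}).

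I do not anticipate a serious obstacle: the identity amounts to a conditioning argument at the meeting time combined with stationarity. The only care required lies in checking that $\M$ is a stopping time for the product chain and that the strong Markov property transfers to that product chain, both of which are standard facts for irreducible continuous-time Markov chains on a finite state space.
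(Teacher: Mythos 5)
Your proposal is correct and follows essentially the same route as the paper: evaluate $\int_0^\infty e^{-\lambda t}\,\P(X_t^U=Y_t^{U'})\,\d t$ once directly via stationarity to obtain $\lambda^{-1}\sum_z \pi(z)^2$, and once by restricting to $[\M,\infty)$ and applying the strong Markov property at $\M$ to obtain the right-hand side.
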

\begin{proof}
We have
\[
\E\left[\int_0^\infty \e^{-\lambda t}\1_{\{X^U_t=Y^{U'}_t\}}\d t\right]=\E\left[\int_M^\infty \e^{-\lambda t}\1_{\{X^U_t=Y^{U'}_t\}}\d t\right]
=\E\big[\e^{-\lambda \M}G_{\lambda}\big(X^U_\M,X^{U}_\M\big)\big]
\]
by the strong Markov property of $(X^{U},Y^{U'})$ at $\M$. Also, independence and stationarity imply that the right-hand is reduced to $\lambda^{-1}\sum_{z\in E}\pi(z)^2$. We have proved \eqref{G-rec}. 
\end{proof}

Define an inner product for functions on $E$ by
\begin{align}\label{ip}
\textcolor{black}{\langle f,g\rangle\;\defeq\;\sum_{y\in E}f(y)g(y)},
\end{align}
and denote by $\delta_x$ the delta function at $x\in E$: $\delta_x(y)$ is $1$ if $y=x$ and is zero otherwise. In the case that $Q$ is symmetric, the stationary distribution $\pi$ is uniform on $E$, and $\e^{t(Q-1)}(y,z)=\e^{t(Q-1)}(z,y)$ so that 
\begin{align}
 G_{\lambda}(x,y)
=&\sum_{z\in E}\int_0^\infty \e^{-\lambda t}\e^{t(Q-1)}(x,z)\e^{t(Q-1)}(z,y)\d t\notag\\
=&\int_0^\infty \e^{-\lambda t}\left\langle \delta_x,\e^{2t(Q-1)}\delta_y\right\rangle \d t\hspace{2cm} \mbox{(summing over $z$)}\notag\\
=&\left\langle \delta_x,\frac{1}{\lambda+2(1-Q)}\delta_y\right\rangle\label{Gxy0}\hspace{2.85cm} \mbox{(functional calculus)}.
\end{align}
If, moreover, $G_{\lambda}(x,x)$ is independent of $x$,  then it equals $N^{-1}\sum_{y\in E}G_{\lambda}(y,y)$. In this case, a division of both sides of  (\ref{G-rec}) by this normalized sum yields
\begin{align}\label{norm_trace}
\E\big[\e^{-\lambda M}\big]=\frac{(\lambda N) ^{-1}}{N^{-1}\sum_{x\in E}G_{\lambda}(x,x)}
=\frac{(\lambda N)^{-1}}{N^{-1}\tr\left(\frac{1}{\lambda+2(1-Q)}\right)}
\end{align}
by (\ref{Gxy0}). In other words, the distribution of $M$ can be represented explicitly by the normalized spectral measure $B\mapsto N^{-1}\tr\big(\1_B(Q)\big)$.

\begin{eg}\label{eg:tori}
On a discrete torus of dimension $d\geq 3$, \eqref{norm_trace} applies since $Q$ is symmetric and  the constancy of $Q^\ell(x,x)$ in $x$ for all $\ell\in \Bbb  Z_+$ holds. Indeed, this constancy is equivalent to the constancy of $G_\lambda(x,x)$ in $x$ for all $\lambda\in (0,\infty)$ since  $G_\lambda(x,x)$ is the Laplace transform of $t\mapsto Q_t(x,x)$ and $\e^tQ_t(x,x)$ is the generating function of $\ell\mapsto Q^{\ell}(x,x)/\ell!$ in $t$ [see (\ref{Qt:sum})]. In this case, explicit asymptotic results of the Laplace transforms can be obtained from the known eigenvalues of the discrete-time random walks (cf. \cite[Section~12.3.1 and Lemma~12.11]{LPW}). The scaling of $\lambda$  for the asymptotics is the straightforward $1/N$. Observe that this spectral method can be seen as a different facet of  the proof of \cite[Theorem~7]{Cox_1989}, although that proof originally uses the characteristic functions of the random walks to represent $G_\lambda(x,y)$. See also \cite[Section~6]{JLT}. \hfill $\blacksquare$
\end{eg}

We turn to the case of large random regular graphs in the next section and resume the setup of general Markov chains afterward. 

\section{Asymptotics on large random regular graphs}\label{sec:rrg}
In this section, we derive the asymptotic distribution of the meeting time $M$ on a large random regular graph. For the basic terminology of graph theory used below, we refer the reader to \cite{Bollobas:GT} for the details.  

The random regular graphs are defined as follows. For a fixed integer $k\geq 3$, we choose a sequence $\{N_n\}$ of positive integers such that $N_n\to\infty$ and $k$-regular graphs (without loops and multiple edges)  on $N_n$ vertices exist. The choice of these integers $N_n$ follows from an application of the Erd\H{o}s--Gallai necessary and sufficient condition (cf. \cite{TVW}), which requires that $kN_n$ be even and $k\leq N_n-1$ in the present case. Then  the random regular graph on $N_n$ vertices is the graph $G_n$ uniformly chosen from the set of $k$-regular graphs with $N_n$ vertices. We assume that the randomness defining the graphs is collectively subject to the probability $\mathbf P$ and expectation $\mathbf E$, as opposed to the quenched probability $\P^{(n)}$ and quenched expectation $\E^{(n)}$ for random walks on $G_n$'s.  

The random walk on $G_n$ has a symmetric transition kernel $Q^{(n)}$ such that $Q^{(n)}(x,y)=1/k$ whenever there is an edge between $x$ and $y$, and $Q^{(n)}(x,y)=0$ otherwise. One stationary distribution $\pi^{(n)}$ of $Q^{(n)}$ is given by the uniform distribution.\vspace{.1cm}

\noindent \hypertarget{P1}{{\bf (P1)}} $\pi^{(n)}(x)\equiv 1/N_n$.\vspace{.1cm}

On these graphs,  the expression (\ref{Gxy0}) for $G_{\lambda}(x,y)$ remains valid since $Q$ is symmetric. We do not know if the trace formula in (\ref{norm_trace}) still applies since the $\bf P$-probability that $Q^{(n),\ell}(x,x)$ is independent of $x$ for any $\ell\geq 1$ does not tend to one as $n\to\infty$, and so, by the argument in Example~\ref{eg:tori}, the constancy of $G_\lambda(x,x)$ in $x$ for all $\lambda$ breaks down. 

This lack of constancy of $Q^{(n),\ell}(x,x)$ can be seen as follows. Recall that a cycle is a sequence of edges $(x_0,x_1),(x_1,x_2),\cdots,(x_{r-1},x_r)$ defined by vertices $x_0,x_1,\cdots,x_r$ such that $x_0=x_r$ and $x_0,x_2,\cdots,x_{r-1}$ are distinct. It is known that the number $C_n(r)$ of cycles of length $r$ in $G_n$ converges in distribution to a Poisson random variable with mean $(k-1)^r/(2r)$ for every $r\geq 3$. See \cite[Section~2.4]{Bollobas}. Hence, for example, the $\mathbf P$-probability to find two distinct vertices $x_n,y_n$ with $Q^{(n),3}(x_n,x_n)=0$ and $Q^{(n),3}(y_n,y_n)>0$ tends to one. (Here, $Q^{(n),\ell}$ is the $\ell$-th step transition probability of $Q^{(n)}$.) In the rest of this section, we show an extension of (\ref{Gxy0}), using the additional properties (\hyperlink{P2}{P2}) and (\hyperlink{P3}{P3}) of the random regular graphs introduced below.  

First, the order-$1$ limiting law of $C_n(r)$ implies a locally tree-like property: As $n\to\infty$,  $rC_n(r)/N_n$ converges to zero in probability for every  fixed $r$, whereas $rC_n(r)$ is an easy bound for the number of vertices which can be passed through by an $r$-cycle. Hence, (\hyperlink{P1}{P1}) implies the following property. Here, $\xrightarrow[n\to\infty]{\mathbf P}$ denotes convergence in $\mathbf P$-probability as $n\to\infty$.  \vspace{.1cm}

\noindent  \hypertarget{P2}{{\bf (P2)}} For every $\ell\in \Bbb  Z_+$, we can find a constant $Q^{(\infty),\ell}$ such that 
\begin{align}\label{lc}
\pi^{(n)}\big\{x\in E_n;Q^{(n),\ell}(x,x)\neq Q^{(\infty),\ell}\big\}\xrightarrow[n\to\infty]{{\mathbf P}}0,
\end{align}
where $\pi^{(n)}$ is given by (\hyperlink{P1}{P1}).
\vspace{.1cm}

The transition probability $Q^{(n)}$ on $G_n$ is the $k^{-1}$ multiple of the adjacency matrix. Hence, by the locally tree-like property mentioned above and the spatial homogeneity of the infinite tree, $Q^{(\infty),\ell}=k^{-\ell}\cdot \#\{\mbox{$x$-$x$ walks of length $\ell$}\}$ for any vertex $x$. (As above, see \cite{Bollobas:GT} for the precise definition of the terminology from graph theory.) McKay 
\cite{McKay} shows
\begin{align}\label{qinftyell}
k^\ell Q^{(\infty),\ell}=\int_\R q^\ell \mu_k(\d q),
\end{align}
where the measure $\mu_k$  is now often known as  the Kesten--McKay law:
\[
\mu_k(\d q)=\1_{(-2\sqrt{k-1},2\sqrt{k-1})}(q)\frac{k\sqrt{4(k-1)-q^2}}{2\pi(k^2-q^2)}\d q,\quad q\in\ \R.
\]
By (\ref{qinftyell}), $\mu_k$ is the spectral measure of the adjacency matrix of the infinite tree.

For the following proof, we  only need the explicit form of $\sum_{\ell=0}^\infty Q^{(\infty),\ell}$. As a particular case of \cite[(16.20) and (16.21)]{AW} where the adjacency matrix is viewed as an operator acting on square-summable functions, we have
\[ 
\int_{\R}\frac{1}{k-q}\mu_k(\d q)=\frac{1}{k-k\Gamma}\quad  \mbox{ for $\Gamma$ satisfying }\quad 
\sqrt{k-1}\Gamma=\frac{k}{2\sqrt{k-1}}-\sqrt{\frac{k^2}{4(k-1)}-1}.
\]
It follows that
\begin{align}\label{sum:Qell}
\sum_{\ell=0}^\infty Q^{(\infty),\ell}
=\sum_{\ell=0}^\infty \int_{\R}\left(\frac{q}{k}\right)^\ell \mu_k(\d q)=\int_\R \frac{1}{1-q/k}\mu_k(\d q)
=\frac{k-1}{k-2}.
\end{align}
Since $\mu_k$ is the spectral measure of the adjacency matrix of the $k$-regular tree, the integral $\int 1/(1-q/k)\mu_k(\d q)$ is an extension of the spectral formula for Kemeny's constant \cite[Proposition~3.13]{AF}. For this reason, the main theorem below may be seen as an extension of a basic identity between hitting times and meeting times on graphs with good symmetry as mentioned in the introduction (cf. \cite[Proposition~14.5]{AF}). 

The last property is for the spectral gaps of the random regular graphs. See \cite{Friedman_2008,Bordenave_2015}.  \medskip

\noindent  \hypertarget{P3}{{\bf (P3)}} Write $\lambda^{(n)}_{N_n}\leq \lambda^{(n)}_{N_n-1}\leq \cdots \leq \lambda^{(n)}_{1}=1$ for the eigenvalues of $Q^{(n)}$. For some ${\bf g}\in (0,1)$, the events $\Lambda_n\;\defeq\;\big\{\lambda^{(n)}_{r}\subseteq [-1+{\bf g},1-{\bf g}],\;\forall\;2\leq r\leq N_n\big\}$ satisfy $\mathbf P(\Lambda_n)\to 1$. \medskip

On $\Lambda_n$, there is only one connected component of $G_n$ since $\lambda^{(n)}_2< 1$ and $kQ^{(n)}$ is the adjacency matrix of $G_n$ \cite[Lemma~1.7 (iv)]{Chung}, and so $Q^{(n)}$ is irreducible. The uniform distribution in (\hyperlink{P1}{P1}) is thus the unique stationary distribution of $Q^{(n)}$  \cite[Proposition~1.14]{LPW}.  

\begin{rmk}\label{rmk:rrg}
(1) (\hyperlink{P3}{P3}) is equivalent to the property that every subsequence of $\{G_{n_i}\}$ contains a further subsequence $\{G_{n_{i_j}}\}$ such that $\mathbf P$-a.s., for some random integer $j_0\geq 1$, all the eigenvalues but the first one of the random walk on $G_{n_{i_j}}$ are contained in $ [-1+\mathbf g,1-\mathbf g]$ for all $j\geq j_0$. 
 
To see this connection, note that by the Borel--Cantelli lemma, $\mathbf P(\Lambda_n^\complement)\to 0$ implies that every subsequence $\{G_{n_i}\}$ contains a further subsequence $\{G_{n_{i_j}}\}$ such that $\mathbf P\big(\limsup_{j\to\infty}\Lambda_{n_{i_j}}^\complement\big)=0$, or equivalently $\mathbf P\big(\liminf_{j\to\infty}\Lambda_{n_{i_j}}\big)=1$. The converse is implied by Fatou's lemma since $\mathbf P\big(\liminf_{j\to\infty}\Lambda_{n_{i_j}}\big)=1$ gives $\lim_{j\to\infty}\mathbf P(\Lambda_{n_{i_j}})=1$.\medskip
 
\noindent (2) In \cite[Section~4.3]{C}, the convergence of some weak perturbation of the voter model based on the random regular graphs is obtained. The proof applies the property that $\mathbf P$-a.s., the second eigenvalue $\lambda^{(n)}_2$ is bounded away from $1$ for all large $n$. 

This property of the second eigenvalues is not the same as the property in (1). We do not know if the former holds or not. Hence, to be precise, given this fact for the context of $\mathbf P$-a.s. convergence, the statement of the convergence result in \cite[Section~4.3]{C} should be changed to the one that passes the limit along an appropriate subsequence of any given subsequence of $\{G_n\}$. See also the first statement of Theorem~\ref{thm:MUU} below. 
\hfill $\blacksquare$
\end{rmk}

Equipped with (\hyperlink{P1}{P1})--(\hyperlink{P3}{P3}) specified above, we proceed to the proof of the explicit asymptotics of the meeting times $M$ and their first moments. On $\Lambda_n$, $Q^{(n)}$ satisfies the assumptions at the beginning of Section~\ref{sec:spec}. Also, note that $M=+\infty$ with positive probability if the underlying graph is not connected, although what happens on $\Lambda_n^\complement$ is not important in the limit. Considering (\ref{Gxy0}), we extend the nontrivial contribution in $G_\lambda(x,y)$ under $Q^{(n)}$ to the case $\lambda=0$ by setting
\begin{align}\label{def:G<}
G_\lambda^{<}(x,y)\;\defeq\;\left\langle \delta_x,\frac{1}{\lambda+2(1-Q^{(n)})}\1_{[-1,1)}(Q^{(n)})\delta_y\right\rangle,\quad \lambda\in [0,\infty),\;\mbox{ on $\Lambda_n$}.
\end{align}
For convenience, we set $G^<_\lambda(x,y)\equiv \sum_{\ell=0}^\infty 2^\ell Q^{(\infty),\ell}/(\lambda+2)^{\ell+1}$ on $\Lambda_n^\complement$. 

\begin{lem}\label{lem:main}
\sl 
For every $\lambda\in (0,\infty)$,
\begin{align}\label{convP}
&\E^{(n)}\big[\e^{-\lambda \M/N_n}G^<_{\lambda/N_n}\big(X^U_\M,X^U_\M\big)\big]-
\E^{(n)}\big[\e^{-\lambda \M/N_n}\big]\sum_{\ell=0}^\infty \frac{2^\ell Q^{(\infty),\ell}}{(\lambda/N_n+2)^{\ell+1}}\xrightarrow[n\to\infty]{\mathbf P}0.
\end{align}
\end{lem}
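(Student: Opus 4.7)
The strategy is to expand $G^<_{\lambda/N_n}(x,x)$ as a power series in $Q^{(n)}$, compare it term-by-term with the Kesten--McKay integral defining $G^{<,\infty}_{\lambda/N_n}$, and then transfer the resulting pointwise bound to the expectation when $x$ is replaced by the random meeting location $X^U_M$.

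On $\Lambda_n$, the only eigenvalue of $Q^{(n)}$ outside $[-1+\mathbf g,1-\mathbf g]$ is $\lambda^{(n)}_1=1$ with eigenvector $N_n^{-1/2}\mathbf 1$, so $\1_{[-1,1)}(Q^{(n)}) = I - N_n^{-1}\mathbf 1 \mathbf 1^\top$. Combining this with the geometric expansion $(\lambda/N_n+2-2Q^{(n)})^{-1} = \sum_{\ell\ge 0}2^\ell (Q^{(n)})^\ell/(\lambda/N_n+2)^{\ell+1}$ (which converges since $\|Q^{(n)}\|=1 < 1 + \lambda/(2N_n)$) yields
\[
G^<_{\lambda/N_n}(x,x) = \sum_{\ell=0}^\infty \frac{2^\ell\bigl[Q^{(n),\ell}(x,x)-N_n^{-1}\bigr]}{(\lambda/N_n+2)^{\ell+1}},
\]
and hence
\[
\Delta^{(n)}(x) := G^<_{\lambda/N_n}(x,x) - G^{<,\infty}_{\lambda/N_n} = \sum_{\ell=0}^\infty \frac{2^\ell\bigl[Q^{(n),\ell}(x,x)-N_n^{-1}-Q^{(\infty),\ell}\bigr]}{(\lambda/N_n+2)^{\ell+1}}.
\]

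To estimate $\Delta^{(n)}(x)$, I would truncate at an auxiliary level $L$. The tail $\ell>L$ is uniformly $O(\rho^L)$ for $\rho := \max\{1-\mathbf g,\,2\sqrt{k-1}/k\}<1$, since (P3) gives $|Q^{(n),\ell}(x,x)-N_n^{-1}| = |\sum_{i\ge 2}(\lambda^{(n)}_i)^\ell v_i(x)^2| \le (1-\mathbf g)^\ell$ on $\Lambda_n$, while the Kesten--McKay support gives $|Q^{(\infty),\ell}|\le (2\sqrt{k-1}/k)^\ell$ (with $2\sqrt{k-1}/k<1$ for $k\ge 3$). For the head $\ell\le L$, (P2) provides a random set $T_{L,n}\subset E_n$ (the joint $\pi^{(n)}$-exceptional set over $\ell=0,\ldots,L$) with $\pi^{(n)}(T_{L,n}^c)\to 0$ in $\mathbf P$ on which $Q^{(n),\ell}(x,x) = Q^{(\infty),\ell}$ exactly; the head then collapses to $-N_n^{-1}\sum_{\ell\le L}2^\ell/(\lambda/N_n+2)^{\ell+1} = O(L/N_n)$. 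Altogether, $|\Delta^{(n)}(x)|\le O(L/N_n)+O(\rho^L)$ for $x\in T_{L,n}\cap\Lambda_n$, while $|\Delta^{(n)}(x)|\le C(\mathbf g,\lambda)$ uniformly on $\Lambda_n$. Since $0\le e^{-\lambda M/N_n}\le 1$ and $\mathbf P(\Lambda_n)\to 1$,
\[
\bigl|\E^{(n)}[e^{-\lambda M/N_n}\Delta^{(n)}(X^U_M)]\bigr| \le O(L/N_n) + O(\rho^L) + C(\mathbf g,\lambda)\,\P^{(n)}(X^U_M\in T_{L,n}^c)
\]
on $\Lambda_n$, so the proof reduces to showing $\P^{(n)}(X^U_M\in T_{L,n}^c)\xrightarrow[n\to\infty]{\mathbf P}0$ for each fixed $L$; sending $n\to\infty$ first and then $L\to\infty$ would conclude.

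The hardest part is this last reduction: translating the $\pi^{(n)}$-smallness of $T_{L,n}^c$ into $\P^{(n)}$-smallness of $\{X^U_M\in T_{L,n}^c\}$. Exchangeability of the law of $G_n$ under vertex relabelings gives $\mathbf E[\P^{(n)}(X^U_M=z)]=1/N_n$ uniformly in $z$, and this together with $\mathbf E[\pi^{(n)}(T_{L,n}^c)]\to 0$ yields partial information, but a direct Cauchy--Schwarz loses a factor of $\sqrt{N_n}$ and is insufficient. A sharper bound must exploit the rapid mixing on $\Lambda_n$ (spectral gap $\mathbf g$, mixing time $O(\log N_n/\mathbf g)$) in contrast to the fact that meeting times on $k$-regular graphs are known to be of order $N_n$: conditioning on the path of $Y^{U'}$ and regarding $M$ as a first hitting time of a moving target by $X^U$, a quantitative mixing estimate should give $\P^{(n)}(X^U_M=z) = O(1/N_n)$ uniformly in $z$ with high $\mathbf P$-probability, from which the transfer $\P^{(n)}(X^U_M\in T_{L,n}^c) \le O(\pi^{(n)}(T_{L,n}^c))$ is immediate and the lemma follows.
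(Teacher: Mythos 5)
Your decomposition is sound and parallels the paper's: the geometric expansion of $G^<_{\lambda/N_n}$ after removing the top eigenspace, the head/tail truncation at level $L$, the spectral bound $|Q^{(n),\ell}(x,x)-N_n^{-1}|\le(1-\mathbf g)^\ell$ from (P3), and the Kesten--McKay decay $|Q^{(\infty),\ell}|\le(2\sqrt{k-1}/k)^\ell$ are all correct. You also correctly isolate the one nontrivial step: converting $\pi^{(n)}$-smallness of the (P2)-exceptional set into $\P^{(n)}$-smallness of the event $\{X^U_M\in T_{L,n}^c\}$. But you leave this step open, and the route you sketch --- deduce $\P^{(n)}(X^U_M=z)=O(1/N_n)$ uniformly in $z$ from quantitative mixing with a moving target --- is neither carried out nor the one the paper uses; it would require controlling the conditional law of $X^U_M$ given the survival event $\{M>t\}$, which is delicate.

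The paper closes the gap by an exact renewal-type inequality, Proposition~\ref{prop:recursion}: for every $\lambda>0$ and every $x$,
\[
\E\big[e^{-\lambda M};X^U_M=x\big]\le \frac{2+\lambda}{\lambda}\,\pi(x)^2.
\]
This is obtained by writing $\pi(x)^2=\int_0^\infty \lambda e^{-\lambda t}\P(X^U_t=x,Y^{U'}_t=x)\,\d t$ (stationarity and independence), splitting the integral at $M$ and at $M+J$ where $J$ is the first update time after $M$, noting that $X^U_t=Y^{U'}_t$ on $[M,M+J)$ so the first piece equals $\E[e^{-\lambda M};X^U_M=x]\cdot\E[1-e^{-\lambda(\mathbf e_1\wedge\mathbf e_2)}]$, and dropping the (nonnegative) second piece. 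With $\lambda\mapsto\lambda/N_n$ and $\pi(x)=1/N_n$ this gives precisely $\E^{(n)}[e^{-\lambda M/N_n};X^U_M=x]\le\frac{2+\lambda/N_n}{\lambda/N_n}N_n^{-2}$, whose sum over $x\in T_{L,n}^c$ is $\lesssim\lambda^{-1}\pi^{(n)}(T_{L,n}^c)$ --- exactly the transfer you need, and valid on any finite irreducible reversible chain with no mixing-time input. Note two advantages over your plan: it needs only the $e^{-\lambda M}$-weighted bound (which is what the lemma requires), not the stronger unweighted claim $\P^{(n)}(X^U_M=z)=O(1/N_n)$; and it is elementary and deterministic on $\Lambda_n$, so no extra high-$\mathbf P$-probability event is needed. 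Without an argument of this kind, your proof has a genuine gap at the reduction step.
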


\begin{proof}Write 
$\E^{(n)}\big[\e^{-\lambda \M/N_n};X^U_M=x\big]$ for $\E^{(n)}\big[\e^{-\lambda \M/N_n}\1_{\{X^U_M=x\}}\big]$. Here and in (\ref{convP}), we use the convention  that $\e^{-\infty}=0$ when $M=+\infty$.

For every fixed $n\in \Bbb  N$, the difference in (\ref{convP}) is zero on $\Lambda_n^\complement$. On $\Lambda_n$, for arbitrary odd integer $L\geq 1$, it holds that 
\begin{align}
&\quad \E^{(n)}\left[\e^{-\lambda \M/N_n}G^<_{\lambda/N_n}\big(X^U_\M,X^U_\M\big)\right]\notag\\
&=\sum_{x\in E_n}\E^{(n)}\big[\e^{-\lambda \M/N_n};X^U_M=x\big]
\left\langle \delta_x,\frac{1}{\lambda/N_n+2(1-Q^{(n)})}\1_{[-1,1)}(Q^{(n)})\delta_x\right\rangle\notag\\
&=\sum_{x\in E_n}\E^{(n)}\big[\e^{-\lambda \M/N_n};X^U_M=x\big]
\left\langle \delta_x,\sum_{\ell =0}^\infty \frac{2^\ell (Q^{(n)})^\ell }{(\lambda/N_n+2)^{\ell +1}}\1_{[-1,1)}(Q^{(n)})\delta_x\right\rangle\notag\\
&=\sum_{x\in E_n}\E^{(n)}\big[\e^{-\lambda \M/N_n};X^U_M=x\big]
\sum_{\ell =0}^L \frac{2^\ell \langle \delta_x, (Q^{(n)})^\ell \delta_x\rangle}{(\lambda/N_n+2)^{\ell +1}}\notag\\
&\quad -\sum_{x\in E_n}\E^{(n)}\big[\e^{-\lambda \M/N_n};X^U_M=x\big]
\sum_{\ell =0}^L \frac{2^\ell \langle \delta_x, \1_{\{1\}}(Q^{(n)})\delta_x\rangle}{(\lambda/N_n+2)^{\ell +1}}\notag\\
&\quad +\sum_{x\in E_n}\E^{(n)}\big[\e^{-\lambda \M/N_n};X^U_M=x\big]
\sum_{\ell =L+1}^\infty \frac{2^\ell \langle \delta_x, (Q^{(n)})^\ell \1_{[-1,1)}(Q^{(n)})\delta_x\rangle}{(\lambda/N_n+2)^{\ell +1}}\notag\\
&={\rm I}-{\rm II}+{\rm III}. \label{sum:1}
\end{align}
Note that {\rm III} is nonnegative by \eqref{def:fA} and the nonnegativity of $\sum_{\ell=L+1}^\infty \frac{2^\ell q^\ell}{(\lambda/N_n+2)^{\ell+1}}$ on $q\in [-1,1)$, since $L+1$ is even by the choice of $L$. By \eqref{def:fA} again, {\rm II} is plainly nonnegative. 

As a counterpart of \eqref{sum:1}, we write 
\begin{align}
&\quad \E^{(n)}\big[\e^{-\lambda \M/N_n}\big] \sum_{\ell=0}^\infty\frac{2^\ell Q^{(\infty),\ell} }{(\lambda/N_n+2)^{\ell +1}}\notag\\
&=\sum_{x\in E_n}\E^{(n)}\big[\e^{-\lambda \M/N_n};X^U_M=x\big]\sum_{\ell=0}^L\frac{2^\ell Q^{(\infty),\ell}}{(\ell/N_n+2)^{\ell+1}}+\E^{(n)}\big[\e^{-\lambda \M/N_n}\big]\sum_{\ell=L+1}^\infty\frac{2^\ell Q^{(\infty),\ell}}{(\lambda/N_n+2)^{\ell+1}}\notag\\
&={\rm I}'+{\rm III}',\label{sum:2}
\end{align}
where the trivial sum over $x\in E_n$ is for the convenience of the following argument. 

For any arbitrary $\vep>0$, we can choose $L$ large enough such that 
\begin{align}\label{def:L}
\sum_{\ell=L+1}^\infty (1-\mathbf g)^\ell\leq \frac{\vep}{4}
\quad\&\quad 
\sum_{\ell=L+1}^\infty Q^{(\infty),\ell}\leq \frac{\vep}{4} 
\end{align}
by (\hyperlink{P3}{P3}) and (\ref{sum:Qell}).  Now, we compare both sides of (\ref{sum:1}) and (\ref{sum:2}). It is enough to show all of the following equalities:
\begin{align}\label{cip}
\begin{split}
0&=\lim_{n\to\infty}\mathbf P\left(|{\rm I}-{\rm I}'|>\frac{\vep}{4},\Lambda_n\right)
=\lim_{n\to\infty}\mathbf P\left({\rm II}>\frac{\vep}{4},\Lambda_n\right),\\
0&=\mathbf P\left({\rm III}>\frac{\vep}{4},\Lambda_n\right)
=\mathbf P\left({\rm III}'>\frac{\vep}{4},\Lambda_n\right).
\end{split}
\end{align}
If all of these are proven, then along with the convergence $\mathbf P(\Lambda_n^\complement)\to 0$ from (\hyperlink{P3}{P3}), the probability that the absolute value of the difference in \eqref{convP} is greater than $\vep$ tends to zero as $n\to\infty$. The convergence in (\ref{convP}) thus follows since $\vep>0$ is arbitrary.

For ${\rm I}-{\rm I}'$, we write $\langle \delta_x,(Q^{(n)})^\ell\delta_x\rangle=Q^{(n),\ell}(x,x)$ and apply a simple bound proven later on for $\E^{(n)}[\e^{-\lambda M};X^U_M=\,\cdot\,]$ [see \eqref{dominate}] along with (\hyperlink{P1}{P1}). These steps give
\begin{align}
|{\rm I}-{\rm I}'|&\leq \sum_{\ell=0}^L\sum_{x\in E_n}\left(\frac{2+\lambda/N_n}{\lambda/N_n}\cdot \frac{1}{N_n^2}\right)\cdot \frac{2^\ell|Q^{(n),\ell}(x,x)-Q^{(\infty),\ell}|}{(\lambda/N_n+2)^{\ell+1}}\label{I-I}\\
&\leq \frac{1}{\lambda}\sum_{\ell=0}^L\int_{E_n}|Q^{(n),\ell}(x,x)-Q^{(\infty),\ell}| \pi^{(n)}(
\d x)\notag
\end{align}
by (\hyperlink{P1}{P1}) again. Since $Q^{(n),\ell}(x,x)$ and $Q^{(\infty),\ell}$ are all bounded by $1$, it follows from (\hyperlink{P2}{P2}) that the required convergence for ${\rm I}-{\rm I}'$ in (\ref{cip}) holds. To see the convergence  in (\ref{cip}) for ${\rm II}$, we simply use the fact that by (\hyperlink{P1}{P1}) and the definition of $\1_{\{1\}}(Q^{(n)})$, 
\begin{align}\label{1Q}
\langle \delta_x,\1_{\{1\}}(Q^{(n)})\delta_x\rangle\equiv 1/N_n\quad\mbox{on }\Lambda_n
\end{align}
[see  (\ref{ip}) and Section~\ref{sec:appendix}]. For III, it follows from functional calculus  and  (\hyperlink{P3}{P3})  that on the event $\Lambda_n$, 
\[
{\rm III}\leq \sum_{x\in E_n}\E^{(n)}\big[\e^{-\lambda \M/N_n};X^U_M=x\big]\sum_{\ell =L+1}^\infty \frac{2^\ell \langle \delta_x,(1- {\bf g})^\ell\Id \delta_x\rangle}{(\lambda/N_n+2)^{\ell +1}}\leq \sum_{\ell=L+1}^\infty (1-\mathbf g)^\ell
\]
so that $\mathbf P({\rm III}>\vep/4,\Gamma_n)=0$ by the first inequality in (\ref{def:L}). The second inequality in (\ref{def:L}) gives $\mathbf P({\rm III}'>\vep/4,\Gamma_n)= 0$. We have proved all the equalities in (\ref{cip}). The proof is complete.
\end{proof}

The main result of this note is the following theorem. For its statement, we write $\ms L(\xi)$ for the law of a random variable $\xi$, and the convergence in \eqref{conv:M} refers to convergence in distribution as $j\to\infty$. Recall that $\mathbf e$ denotes an exponential random variable with $\E[\mathbf e]=1$.

\begin{thm}[Main result]\label{thm:MUU}
\sl   Fix an integer $k\geq 3$, and consider the random walks on random regular $k$-graphs $\{G_n\}$, where the graphs are subject to probability $\mathbf P$. Every subsequence $\{G_{n_i}\}$ contains a further subsequence $\{G_{n_{i_j}}\}$ such that 
\begin{align}\label{conv:M}
\ms L\left(\frac{\M}{N_{n_{i_j}}}\right)\xrightarrow[j\to\infty]{({\rm d})}\ms L\left(\frac{1}{2}\left(\frac{k-1}{k-2}\right)\mathbf e\right)\quad\mbox{$\mathbf P$-a.s.,}
\end{align}
where the law of $M/N_n$ is understood to be under $\P^{(n)}$ for every $n$. Also,
\begin{align}\label{cnst:M}
\E^{(n)}\left[\left(\frac{M}{N_n}\right)^\ell\right]\xrightarrow[n\to\infty]{{\mathbf P}}\ell!\left[\frac{1}{2}\left(\frac{k-1}{k-2}\right)\right]^\ell,\quad \forall\;\ell\in \Bbb  N.
\end{align}
\end{thm}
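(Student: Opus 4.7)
The plan is to combine the exact identity \eqref{G-rec} with Lemma~\ref{lem:main} to derive a closed asymptotic equation for the Laplace transform $\phi_n(\lambda):=\E^{(n)}\bigl[e^{-\lambda M/N_n}\bigr]$, $\lambda>0$, and then read off both the limiting exponential distribution and its moments. On $\Lambda_n$, the projection $\1_{\{1\}}(Q^{(n)})$ is the orthogonal projection onto the constants and has diagonal $1/N_n$ by \eqref{1Q}; combined with the spectral decomposition behind \eqref{Gxy0}, this gives $G_{\lambda/N_n}(x,x)=G^<_{\lambda/N_n}(x,x)+1/\lambda$. Since $\sum_z\pi^{(n)}(z)^2=1/N_n$ by (\hyperlink{P1}{P1}), substituting $\lambda/N_n$ into \eqref{G-rec} and rearranging yields, on $\Lambda_n$,
\[
\frac{1-\phi_n(\lambda)}{\lambda}=\E^{(n)}\bigl[e^{-\lambda M/N_n}G^<_{\lambda/N_n}(X^U_M,X^U_M)\bigr].
\]

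Combining this identity with Lemma~\ref{lem:main} gives
\[
\frac{1-\phi_n(\lambda)}{\lambda}=\phi_n(\lambda)\,S_n(\lambda)+o_{\mathbf P}(1),\qquad S_n(\lambda):=\sum_{\ell=0}^\infty\frac{2^\ell Q^{(\infty),\ell}}{(\lambda/N_n+2)^{\ell+1}}.
\]
Dominated convergence, using the summability of $Q^{(\infty),\ell}$ from \eqref{sum:Qell}, gives $S_n(\lambda)\to c_k:=\tfrac{1}{2}\cdot\tfrac{k-1}{k-2}$. Using $0\le\phi_n\le 1$ and $\mathbf P(\Lambda_n^\complement)\to 0$ from (\hyperlink{P3}{P3}), I solve to obtain
\[
\phi_n(\lambda)\xrightarrow[n\to\infty]{\mathbf P}\frac{1}{1+c_k\lambda}\qquad\text{for every fixed }\lambda>0,
\]
which is exactly the Laplace transform of $c_k\mathbf e$.

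The subsequence almost-sure statement \eqref{conv:M} then follows by a diagonal extraction over a countable dense set $D\subset(0,\infty)$: for each $\lambda\in D$ take a further subsequence along which $\phi_n(\lambda)$ converges $\mathbf P$-a.s.; the diagonal subsequence $\{G_{n_{i_j}}\}$ delivers simultaneous $\mathbf P$-a.s.\ convergence at all $\lambda\in D$. Monotonicity of $\phi_{n_{i_j}}(\cdot)$ together with continuity of the limit extends the convergence $\mathbf P$-a.s.\ to every $\lambda>0$, and the continuity theorem for Laplace transforms then yields \eqref{conv:M}. For the moment convergence \eqref{cnst:M}, I aim to upgrade the weak convergence by uniform integrability: on $\Lambda_n$ the spectral gap of $Q^{(n)}$ is at least $\mathbf g>0$, and a regeneration-type argument splitting the evolution of $(X,Y)$ into successive ``attempts to meet'' over a mixing time-scale (each succeeding with probability $\gtrsim 1/N_n$ because of the uniform stationary weight in (\hyperlink{P1}{P1})) should give a uniform tail bound of the form $\P^{(n)}(M/N_n>t)\le Ce^{-\alpha t}$ on $\Lambda_n$, hence uniform integrability of every $(M/N_n)^\ell$. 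The subsequence principle then converts moment convergence along a.s.\ subsequences into the claimed convergence in $\mathbf P$-probability along the full sequence.

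The main obstacle will be the uniform tail bound in the last step: the spectral gap $\mathbf g$ on $\Lambda_n$ directly controls mixing of a single chain, but transferring this into a uniform-in-$n$ exponential tail for the meeting time of two independent chains at the correct scaling $N_n$ requires carefully combining the gap-induced mixing with the $1/N_n$ hitting probability of the diagonal, and checking that the constants $C,\alpha$ can be taken deterministic on $\Lambda_n$ uniformly in the realization of $G_n$.
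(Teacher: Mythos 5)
Your derivation of the Laplace-transform convergence is correct and follows essentially the same route as the paper: substitute $\lambda_0=\lambda/N_n$ into \eqref{G-rec}, split $G_{\lambda_0}(x,x)=G^<_{\lambda_0}(x,x)+(\lambda_0 N_n)^{-1}$ on $\Lambda_n$ using \eqref{1Q}, feed in Lemma~\ref{lem:main} and \eqref{sum:Qell} to get $\phi_n(\lambda)\xrightarrow{\mathbf P}(1+c_k\lambda)^{-1}$, and then use a diagonal extraction plus monotonicity of $\lambda\mapsto\phi_n(\lambda)$ to upgrade to $\mathbf P$-a.s.\ convergence for all $\lambda$ along a subsequence. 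This part matches the paper.

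Where you diverge — and where the gap is — is in the proof of \eqref{cnst:M}. You correctly identify that the missing ingredient is uniform integrability of $(M/N_n)^\ell$ on $\Lambda_n$, but you only sketch a probabilistic regeneration argument (successive ``attempts to meet'' over a mixing window, each with success probability of order $1/N_n$), and you explicitly flag that making the constants uniform in $n$ and in the realization of $G_n$ is ``the main obstacle.'' That step is not carried out, so as written the proof of the moment statement is incomplete. The paper instead obtains uniform integrability analytically: differentiate the exact identity
\[
\frac{1}{N_n}=\frac{1}{N_n}\E^{(n)}[e^{-\lambda_0 M}]+\lambda_0\,\E^{(n)}\bigl[e^{-\lambda_0 M}G^<_{\lambda_0}(X^U_M,X^U_M)\bigr]
\]
$\ell$ times at $\lambda_0=0$. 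Since on $\Lambda_n$ the spectrum of $Q^{(n)}$ restricted to the range of $\1_{[-1,1)}(Q^{(n)})$ is contained in $[-1+\mathbf g,1-\mathbf g]$, functional calculus gives the deterministic bound $\bigl|\partial_{\lambda_0}^r G^<_{\lambda_0}(x,x)\big|_{\lambda_0=0}\bigr|\le r!/(2\mathbf g)^{r+1}$, and the resulting recursion for $\E^{(n)}[M^\ell]/N_n$ yields, by induction on $\ell$, a uniform bound on the $\ell$-th moments on $\liminf\Lambda_{n_{i_j}}$. This sidesteps the delicate coupling/regeneration estimates your sketch would require. Your probabilistic route is plausible in spirit (a uniform-in-$n$ exponential tail $\P^{(n)}(M/N_n>t)\le Ce^{-\alpha t}$ on $\Lambda_n$ would indeed suffice), but to close the argument you would need to actually prove that tail bound with constants depending only on $k$ and $\mathbf g$; as it stands this is the one genuinely missing step.

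Two smaller comments. First, when you solve $\frac{1-\phi_n(\lambda)}{\lambda}=\phi_n(\lambda)S_n(\lambda)+o_{\mathbf P}(1)$ for $\phi_n$, it is worth noting explicitly that $1+\lambda S_n(\lambda)$ is bounded away from $0$ (it is $\ge 1$), so the division is harmless. Second, for the subsequence principle to convert a.s.\ moment convergence along subsequences into \eqref{cnst:M} in probability along the full sequence, you also need, as in the paper and Remark~\ref{rmk:rrg}~(1), to arrange $\mathbf P(\liminf_j\Lambda_{n_{i_j}})=1$ so that the deterministic bound on the derivatives of $G^<$ is eventually in force along the chosen subsequence.
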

\begin{proof}
It follows from (\ref{G-rec}) that on $\Lambda_n$,
\begin{align}
\forall\;\lambda_0\in (0,\infty),\quad \sum_{x\in E_n}\pi^{(n)}(x)^2&=\lambda_0 \E^{(n)}[\e^{-\lambda_0 M}G_{\lambda_0}(X^U_M,X^U_M)]\label{res-id0}\\
&=\frac{1}{N_n} \E^{(n)}[\e^{-\lambda_0 M}]+\lambda_0 \E^{(n)}[\e^{-\lambda_0 M}G^<_{\lambda_0}(X^U_M,X^U_M)].\label{res-id}
\end{align} 
Here, the second equality follows from \eqref{Gxy0}, the definition of $G^<_{\lambda_0}$ in \eqref{def:G<}, and \eqref{1Q} since
\[
\left\langle \delta_x,\frac{\1_{\{1\}}(Q^{(n)})}{\lambda_0+2(1-Q^{(n)})}\delta_x\right\rangle=\frac{1}{\lambda_0}\big\langle \delta_x,\1_{\{1\}}(Q^{(n)})\delta_x\big\rangle=\frac{1}{\lambda_0N_n},\quad \forall\;x\in E_n.
\]
The sum of squares on the left-hand side of \eqref{res-id0} is equal to $1/N_n$ by (\hyperlink{P1}{P1}). Therefore, for all $\lambda\in (0,\infty)$, applying Lemma~\ref{lem:main} and  (\ref{sum:Qell}) to \eqref{res-id} with $\lambda_0=\lambda/N_n$ leads to
\[
\E^{(n)}[\e^{-\lambda M/N_n}]\xrightarrow[n\to\infty]{\bf P}\frac{1}{1+\frac{\lambda}{2}\sum_{\ell=0}^\infty Q^{(\infty),\ell}}=\frac{1}{1+\frac{\lambda}{2}\big(\frac{k-1}{k-2}\big)}=\E\big[\e^{-\frac{\lambda}{2}\left(\frac{k-1}{k-2}\right)\mathbf e}\big].
\]
By monotonicity and Cantor's diagonalization, we can find a subsequence $\{G_{n_{i_j}}\}$ such that the foregoing convergences hold for all $\lambda \in[0,\infty)$ $\mathbf P$-a.s. This proves \eqref{conv:M}. For the proof of \eqref{cnst:M}, we further require that $\mathbf  P(\liminf \Lambda_{n_{i_j}})=1$. See Remark~\ref{rmk:rrg} (1). 

To obtain (\ref{cnst:M}), we first note that  $\E^{(n)}[M^\ell]<\infty$ on $\Lambda_n$ for all $\ell\in \Bbb  N$ since the finite chain $(X,Y)$ is irreducible so that $X$ and $Y$ meet at an exponential rate \cite[Proposition~6.3 in Chapter~I]{Asmussen}. Now, we differentiate both sides of (\ref{res-id}) at $\lambda_0=0$ $\ell$ times and get
\begin{align}\label{mom}
0&=\frac{(-1)^\ell}{N_n}\E^{(n)}[M^\ell]+\sum_{r=0}^{\ell-1} {\ell-1 \choose r} \E^{(n)}\left[(-M_0)^{\ell-1-r}\frac{\d^r}{\d \lambda_0^{r}}G^<_{\lambda_0}(X^U_M,X^U_M)\big |_{\lambda_0=0}\right].
\end{align}
Given $\frac{\d^r}{\d \lambda_0^r}\frac{1}{(\lambda_0+A)}=\frac{(-1)^r r!}{(\lambda_0+A)^{r+1}}$, it follows from (\hyperlink{P3}{P3}) and functional calculus that the $r$-th derivatives of $G^<_{\lambda_0}(x,x)$'s are bounded by $r!/(2\mathbf g)^{r+1}$ on $\Lambda_n$. Hence,  for $\{G_{n_{i_j}}\}$ chosen above, \eqref{mom} and induction imply that for any $\ell$, $\{M^\ell/N_{n_{i_j}}^\ell\}$ is uniformly integrable on $\liminf \Lambda_{n_{i_j}}$. We deduce the limits in (\ref{cnst:M}) from the standard formulas of moments of $\mathbf e$.
\end{proof}

\begin{rmk}\label{rmk:mode}
(1) Thanks to  dominated convergence and the uniform integrability observed in the proof, the mode of convergence in (\ref{conv:M}) can be reinforced to convergence in the $L_1$-Wasserstein metric for probability measures. Indeed, on the real line, it is known \cite{Vallender_1974} that the metric can be represented as the $L_1$-norm of the differences of tail distributions. \smallskip 

\noindent (2) See \cite{Pittel} for results that obtain the explicit limit of hitting times of certain Markov chains on infinite sets. \mbox{}\hfill $\blacksquare$
\end{rmk}

As pointed out in Section~\ref{sec:intro}, Theorem~\ref{thm:MUU} obtains the explicit asymptotics of the meeting times. We stress again that it is not a convergence implied by an exponential approximation of the normalized times $M/\E[M]$ as in \cite[Theorem~1.4]{Aldous:AE}, \cite[Theorem~1]{AB} and \cite[Proposition~3.23]{AF}. The methods of proof are also different. After all, our interest is focused on proving \eqref{rrg} and the possible extensions.
 
To see the differences in the proofs, recall that the present method extends the Green function expansion \eqref{norm_trace}. The approximations aim to use only the spectra of the Markov chains (rather than the product chains). In contrast, the method in \cite{Aldous:AE} considers a perturbation-type extension of the following identity, among other things:  The hitting time of a set by an irreducible finite Markov chain (subject to mild conditions) is precisely exponential if the chain starts from the quasi-stationary limit distribution for the first hit. This exponential distribution has an implicit parameter given by the first moment of the hitting time; this moment can be characterized analytically by a variational problem \cite[Section~3.6.5]{AF}.  

The methods in \cite{AB} and \cite[Sections~3.5]{AF} are also different from the present method. They apply the following inequality due to Brown \cite{Brown}: If the tail distribution of a nonnegative random variable $T$ is completely monotone, that is, if the tail distribution is the Laplace transform of a measure, then
\begin{align}
\sup_{t\geq 0}|\P(T>t)-\e^{-t/\E[T]}|\leq \frac{\E[T^2]}{2\E[T]^2}-1.\label{Brown}
\end{align}
For hitting times of sets, the proofs in \cite{AB,AF} apply \eqref{Brown} by working with the reduced transition kernels that collapse the sets to be hit to singletons. Spectral representations in the eigenvalues and eigenfunctions of the reduced kernels are derived for comparing the first two moments as those in the bound of \eqref{Brown}. Moreover, these methods yield bounds for the expected hitting times \cite[Lemma~2]{AB} and \cite[Section~3.5.3]{AF}. These bounds are elementary expressions in the spectral gaps, stationary distributions and transition kernels in one step. Nevertheless, Theorem~\ref{thm:MUU} calls for the use of eigenvalue \emph{distributions}, hence essentially almost most of the eigenvalues, of the transition kernels. 

\section{Probability distributions at meeting times}\label{sec:prob}
In the rest of this note, we resume the general setup that $Q$ is irreducible and reversible and is defined on a finite nonempty set $E$. 

The following proposition shows the domination $\E[\e^{-\lambda M};X^U_M=x]\leq {\rm cnst}(\lambda)\cdot \pi(x)^2$ that is used in (\ref{I-I}). The proof naturally extends to a linear equation \eqref{linear_eq} satisfied by  $x\mapsto \pi(x)^{-1}\E[\e^{-\lambda M};X^U_M=x]$; whether it already appears in the literature is not known to us.

\begin{prop}\label{prop:recursion}
\sl For any $\lambda\in (0,\infty)$, we have
\begin{align}\label{dominate}
\E\big[\e^{-\lambda M};X^U_\M=x\big]\leq \frac{2+\lambda}{\lambda}\pi(x)^2,\quad \forall\;x\in E.
\end{align}
Moreover,  the function
\begin{align}\label{def:F}
F_\lambda(x)=\pi(x)^{-1}\E\big[\e^{-\lambda M};X^U_{M}=x\big],\quad x\in E,
\end{align}
solves the following linear equation:
\begin{align}\label{linear_eq}
(\Id+R_\lambda)F_\lambda=\frac{2+\lambda}{\lambda}\pi,
\end{align}
where $R_\lambda$ is a symmetric matrix with nonnegative entries defined by
\begin{align}\label{def:Slambda}
R_\lambda(x,y)=2\frac{\pi(y)}{\pi(x)}\int_0^\infty \e^{-\lambda t} Q_t(y,x) QQ_t(y,x)\d t,\quad x,y\in E.
\end{align}
\end{prop}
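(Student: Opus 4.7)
The plan is to adapt the double-computation behind (\ref{G-rec}) so that it also records the site where the two chains meet. I would evaluate
\[
\E\left[\int_0^\infty e^{-\lambda t}\1_{\{X^U_t=Y^{U'}_t=x\}}\,\d t\right]
\]
in two ways. Independence and stationarity yield the closed form $\pi(x)^2/\lambda$. Since the integrand vanishes for $t<M$, the strong Markov property at $M$ with an independent restart rewrites the same expectation as
\[
\sum_{z\in E}\E\big[e^{-\lambda M};X^U_M=z\big]\int_0^\infty e^{-\lambda s}Q_s(z,x)^2\,\d s.
\]
Equating the two expressions gives the master identity from which both statements will be extracted.

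For the domination bound (\ref{dominate}), I would drop all terms with $z\neq x$ (legitimate by nonnegativity) and invoke the elementary inequality $Q_s(x,x)\geq e^{-s}$. The latter holds because the zero-trace assumption forces $Q(x,x)=0$, so the holding time at $x$ is exactly $\mathrm{Exp}(1)$. Hence $\int_0^\infty e^{-\lambda s}Q_s(x,x)^2\,\d s\geq 1/(\lambda+2)$, and (\ref{dominate}) follows directly.

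For the linear equation (\ref{linear_eq}), the key step is to convert $Q_s(z,x)^2$ into the combination appearing in $R_\lambda$. Differentiating with the forward equation $\tfrac{\d}{\d s}Q_s=(Q-\Id)Q_s$ gives $\tfrac{\d}{\d s}Q_s(z,x)^2 = 2Q_s(z,x)(QQ_s)(z,x)-2Q_s(z,x)^2$, and integration by parts against $e^{-\lambda s}\,\d s$ on $[0,\infty)$ then yields
\[
(\lambda+2)\int_0^\infty e^{-\lambda s}Q_s(z,x)^2\,\d s = \delta_{z,x} + 2\int_0^\infty e^{-\lambda s}Q_s(z,x)(QQ_s)(z,x)\,\d s.
\]
Substituting this into the master identity and dividing by $\pi(x)$ produces exactly $F_\lambda(x)+\sum_z R_\lambda(x,z)F_\lambda(z) = \tfrac{(2+\lambda)\pi(x)}{\lambda}$. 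Nonnegativity of $R_\lambda$ is immediate; symmetry follows from reversibility of $Q$, which makes both $Q_s$ and $QQ_s$ $\pi$-self-adjoint and hence gives $\pi(y)Q_s(y,x) = \pi(x)Q_s(x,y)$ together with the analogous identity for $QQ_s$. No single step is a real obstacle; the care needed is mainly in the algebraic bookkeeping that correctly places $\pi(z)/\pi(x)$ inside the sum after dividing by $\pi(x)$.
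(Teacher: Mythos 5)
Your proof is correct and follows a genuinely different route from the paper. The paper first decomposes the resolvent integral at the first jump time $J$ of the pair $(X^U,Y^{U'})$ after $M$: on $[M,M+J)$ the chains coincide, which directly yields the factor $\lambda/(2+\lambda)$ and hence (\ref{dominate}), and the post-$J$ contribution is computed via the explicit conditional law of $(X^U_{M+J},Y^{U'}_{M+J})$ (with $J\sim\mathrm{Exp}(2)$ and one of the two coordinates jumping to a $Q(X^U_M,\cdot)$-distributed neighbor). You instead continue directly at $M$ by the strong Markov property, arriving at $\sum_z\E[e^{-\lambda M};X^U_M=z]\int_0^\infty e^{-\lambda s}Q_s(z,x)^2\,\d s=\pi(x)^2/\lambda$, and then convert $Q_s^2$ into $Q_s\cdot QQ_s$ plus the boundary term $\delta_{z,x}$ by integration by parts against the forward equation. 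The two decompositions are secretly the same computation — your boundary term $\delta_{z,x}/(\lambda+2)$ plays exactly the role of the paper's contribution from $[M,M+J)$ — but yours replaces the probabilistic bookkeeping about the first-jump mechanism with a one-line resolvent manipulation of the semigroup. The paper's version makes (\ref{dominate}) fall out immediately; in yours you need the separate elementary bound $Q_s(x,x)\geq e^{-s}$ (which, for the record, holds with no appeal to the zero-trace hypothesis: it is already the $\ell=0$ term in the series $Q_s(x,x)=e^{-s}\sum_{\ell\geq 0}s^\ell Q^\ell(x,x)/\ell!$). Your handling of the symmetry of $R_\lambda$ via reversibility of $Q_s$ and $QQ_s$ is correct and in fact slightly more explicit than the paper, which asserts symmetry without verifying it in the proof.
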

\begin{proof}
Write $\F_t=\sigma(X_s,Y_s;s\leq t)$ and $M+J$ for the first update time of $(X^U,Y^{U'})$ after $M$. Since $X$ and $Y$ are independent rate-$1$ chains, $\P(J>t|\F_M)= \P(\mathbf e_1\wedge \mathbf e_2>t)=\e^{-2t}$ for independent exponential variables $\mathbf e_1$ and $\mathbf e_2$ with mean $1$. Here, $\mathbf e_1\wedge \mathbf e_2$ denotes the minimum of $\mathbf e_1$ and $\mathbf e_2$.

Now, by stationarity and the independence of $X^U$ and $Y^{U'}$, we have: for all $x\in E$,
\begin{align}
 \pi(x)^2&=\int_0^\infty \lambda \e^{-\lambda t}\P(X^U_t=x,Y^{U'}_t=x)\d t\notag\\
\begin{split}
&= \E\left[\int_M^{M+J}\lambda \e^{-\lambda t}\1_{\{X^U_t=x,Y^{U'}_t=x\}}\d t\right]  +\E\left[\int_{M+J}^\infty \lambda \e^{-\lambda t}\1_{\{X^U_t=x,Y^{U'}_t=x\}}\d t\right].\label{recursion:1}
\end{split}
\end{align}
We compute the two terms in the last equality separately. First, since $X^U_t=Y^{U'}_t$ over $t\in [M,M+J)$, the first term in (\ref{recursion:1}) satisfies 
\begin{align}
\E\left[\int_M^{M+J}\lambda \e^{-\lambda t}\1_{\{X^U_t=x,Y^{U'}_t=x\}}\d t\right]
&=\E\left[\e^{-\lambda M}-\e^{-\lambda(M+J)};X^U_M=x\right]\notag\\
&=\E\left[\e^{-\lambda M}\1_{\{X^U_M=x\}}\E\big[1-\e^{-\lambda (\mathbf e_1\wedge \mathbf e_2)}\big]\right]\notag\\
&=\E\left[\e^{-\lambda M};X^U_M=x\right]\frac{\lambda }{2+\lambda}.
\label{recursion:3}
\end{align}
Here, the second equality follows from the strong Markov property at $M$ and the above mentioned conditional distribution of $J$. The last equality and \eqref{recursion:1} prove (\ref{dominate}).  

To obtain (\ref{linear_eq}), we compute the second term in (\ref{recursion:1}). The strong Markov property at time $M+J$ and the independence of $X$ and $Y$ give 
\begin{align*}
&\E\left[\int_{M+J}^\infty \lambda \e^{-\lambda t}\1_{\{X^U_t=x,Y^{U'}_t=x\}}\d t\right]\\
=&\sum_{a,b\in E} \E\left[\lambda \e^{-\lambda (M+J)};X^U_{M+J}=a,Y^{U'}_{M+J}=b\right] \int_0^\infty \e^{-\lambda t} Q_t(a,x)Q_t(b,x)\d t.
\end{align*}
For any $a,b\in E$, we apply the strong Markov property of $(X^U,Y^{U'})$ at $M$ to the last expectation. Then to evaluate $\E[\e^{-\lambda J};X^U_{M+J}=a,Y^{U'}_{M+J}=b|\F_M]$, note that conditioned on $\F_M$, one of the two coordinates of $(X_{M+t}^U,Y^{U'}_{M+t};t\geq 0)$ jumps at time $J$ according to $Q(X^U_M,\,\cdot\,)$ with equal probability. Also, as a basic property of Markov chains, the jump is conditionally independent of $J$, while $J$ is conditionally distributed as $\mathbf e_1\wedge \mathbf e_2$. We get
\begin{align*}
&\quad \E\left[\lambda \e^{-\lambda (M+J)};X^U_{M+J}=a,Y^{U'}_{M+J}=b\right]\\
&=\E\left[\e^{-\lambda M}\E\big[\lambda \e^{-\lambda (\mathbf e_1\wedge \mathbf e_2)}\big];X^U_M=a\right]\frac{1}{2}Q(a,b)+\E\left[\e^{-\lambda M}\E\big[\lambda \e^{-\lambda (\mathbf e_1\wedge \mathbf e_2)}\big];Y^{U'}_M=b\right]\frac{1}{2}Q(b,a)\\
&=\E\left[\e^{-\lambda M}\left(\frac{2\lambda}{2+\lambda}\right);X^U_M=a\right]\frac{1}{2}Q(a,b)+\E\left[\e^{-\lambda M}\left(\frac{2\lambda }{2+\lambda}\right);X^U_M=b\right]\frac{1}{2}Q(b,a),
\end{align*}
where we use $X^U_M=Y^{U'}_M$. Putting the last two displays together, we get
\begin{align}
&\quad \E\left[\int_{M+J}^\infty \lambda \e^{-\lambda t}\1_{\{X^U_t=x,Y^{U'}_t=x\}}\d t\right]\notag\\
&=\frac{2\lambda}{2+\lambda}\sum_{a,b\in E}\E\left[\e^{-\lambda M};X^U_M=a\right]Q(a,b)\int_0^\infty  \e^{-\lambda t}Q_t(a,x)Q_t(b,x)\d t\notag\\
&=\frac{2\lambda}{2+\lambda}\sum_{a\in E}\E\left[\e^{-\lambda M};X^U_M=a\right]\int_0^\infty  \e^{-\lambda t}Q_t(a,x)QQ_t(a,x)\d t.
\label{recursion:4}
\end{align}

Finally, we apply \eqref{recursion:3} and \eqref{recursion:4} to \eqref{recursion:1} and get
\begin{align}
\begin{split}\label{eq:Flambda}
\pi(x)^2&=\E\left[\e^{-\lambda M};X^U_M=x\right]\frac{\lambda }{2+\lambda}\\
&\quad +\frac{2\lambda}{2+\lambda}\sum_{a\in E}\E\left[\e^{-\lambda M};X^U_M=a\right]\int_0^\infty  \e^{-\lambda t}Q_t(a,x)QQ_t(a,x)\d t.
\end{split}
\end{align}
The required equality in (\ref{linear_eq}) now follows by dividing both sides of the last equality  by $\tfrac{\lambda \pi(x)}{2+\lambda}$ and applying the definitions \eqref{def:F}  and \eqref{def:Slambda} of $F_\lambda$ and $R_\lambda$. (The entries $\pi(x)$'s are strictly positive by the irreducibility of $Q$ \cite[Proposition~1.14]{LPW}.) The proof is complete.  
\end{proof}

\section{Appendix: Functional calculus for transition kernels}\label{sec:appendix}
\label{sec:factorization} 
In this section, we recall the functional calculus for transition kernels $Q$ subject to the general assumptions at the beginning of Section~\ref{sec:prob}.

The functional calculus is based on the following transform of $Q$:
\begin{align}\label{def:A}
S(x,y)\;\defeq\;\pi(x)^{1/2}Q(x,y)\pi(y)^{-1/2},\quad x,y\in E.\textcolor{black}{} 
\end{align}
This matrix $S$ is symmetric and its spectrum $\sigma(S)$ is the same as the spectrum $\sigma(Q)$ of $Q$. Our notation here is that an eigenvalue $q$ of multiplicity $m$ appears $m$ times in $\sigma(S)$ and in $\sigma(Q)$. In particular, $1$ is an eigenvalue and has multiplicity $1$. In addition, $S$ admits a set $\{\varphi_q;q\in \sigma(S)\}$ of real-valued eigenvectors,  orthonormal with respect to the inner product defined in (\ref{ip}), such that $\pi^{1/2}$ is the eigenvector associated with the eigenvalue $1$. See \cite[Section~12.1]{LPW} for details of these properties of $S$.

Given the above setup, the matrices $f(S)$ for functions $f:[-1,1]\to \Bbb  C$ are defined by 
\begin{align}\label{def:fA}
f(S)(x,y)\;\defeq\;\sum_{q\in \sigma(S)}f(q)\varphi_q(x)\varphi_{q}(y).
\end{align}
This class of matrices is an extension of $\{S^\ell;\ell\in \Bbb  Z_+\}$ so that for all $f_n,f:[-1,1]\to \Bbb  C$ and $a,b\in \Bbb  C$,  (i) $f_n\to f$ pointwise implies $f_n(S)\to f(S)$ entrywise, (ii)  $(af+bg)(S)=af(S)+bg(S)$ and $fg(S)=f(S)g(S)$. 

With $S$ replaced by $Q$, the above properties apply to the matrices $f(Q)$ defined by 
\begin{align}\label{def:fQ}
f(Q)(x,y)=\pi(x)^{-1/2} f(S)(x,y)\pi(y)^{1/2},\quad x,y\in E.
\end{align}
For example, the transition kernels $Q_t$ of the rate-1 $Q$-Markov chain can be written as 
\begin{align}\label{Qt:sum}
Q_t(x,y)&\,\defeq\;\sum_{\ell=0}^\infty \e^{-t}t^\ell\frac{Q^\ell(x,y)}{\ell!}
=\pi(x)^{-1/2}\sum_{\ell=0}^\infty \e^{-t}t^\ell\frac{S^\ell(x,y)}{\ell!}\pi(y)^{1/2}\notag\\
&=\pi(x)^{-1/2}\e^{t(S-1)}(x,y)\pi(y)^{1/2}=\e^{t(Q-1)}(x,y),
\end{align}
where the last three equalities follow from (\ref{def:A}), (\ref{def:fA}) and (\ref{def:fQ}), respectively. \\

\noindent {\bf Acknowledgements.} The author would like to thank an Associate Editor for suggesting to review earlier methods for the exponential approximations of hitting times and a referee for the careful reading of the manuscript.


\begin{thebibliography}{99}
\bibitem{AW}
Aizenman, M. and Warzel, S.: \emph{Random Operators: Disorder Effects on Quantum Spectra and Dynamics}. Graduate Studies in Mathematics {\bf 168}. American Mathematical Society, 2015. \href{https://mathscinet.ams.org/mathscinet-getitem?mr=3686330}{MR3686330}

\bibitem{Aldous:AE}
Aldous, D. J.: Markov chains with almost exponential hitting times.  \textit{Stochastic Processes and their Applications} \textbf{13}, (1982), 305--310. \href{http://linkinghub.elsevier.com/retrieve/pii/0304414982900163}{doi:10.1016/0304-4149(82)90016-3}

\bibitem{AB}
Aldous, D. J. and and Brown, M.: \emph{Inequalities for Rare Events in Time-Reversible Markov Chains I.} Lecture Notes-Monograph Series {\bf 22}, (1992), 1--16.  \href{https://projecteuclid.org/euclid.lnms/1215461937}{doi:10.1214/lnms/1215461937}

\bibitem{AF}
Aldous, D. J. and Fill, J. A.: \emph{Reversible Markov Chains and Random Walks on Graphs}. Unfinished monograph, 2002. Available 
      at \href{http://www.stat.berkeley.edu/~aldous/RWG/book.html}{http://www.stat.berkeley.edu/$\sim$aldous/RWG/book.html}

\bibitem{Asmussen}
Asmussen, S.: \emph{Applied Probability and Queues}, 2nd edition. Stochastic Modelling and Applied Probability {\bf 51}. Springer Verlag, 2003. \href{https://doi.org/10.1007/b97236}{doi:10.1007/b97236}

\bibitem{BS}
Benjamini, I. and Schramm, O.: Recurrence of distributional limits of finite planar graphs.  \textit{Electronic Journal of Probability} {\bf 6}, (2001), paper no. 23, 13 pp.  \href{https://projecteuclid.org/euclid.ejp/1461097653}{doi:10.1214/EJP.v6-96}

\bibitem{Bollobas:GT}
Bollob\'as, B.: \emph{Graph Theory: An Introductory Course}. Graduate Texts in Mathematics {\bf 63}. Springer Verlag, New York, 1979. \href{https://doi.org/10.1007/978-1-4612-9967-7}{doi:10.1007/978-1-4612-9967-7}

\bibitem{Bollobas}
Bollob\'as, B.: \emph{Random Graphs}, 2nd edition. Cambridge Studies in Advanced Mathematics {\bf 73}. Cambridge University Press, 2001. \href{https://doi.org/10.1017/CBO9780511814068}{doi:10.1017/CBO9780511814068}

\bibitem{Bordenave_2015}
Bordenave, C.: A new proof of Friedman's second eigenvalue theorem and its extension to random lifts. To appear in   \textit{Annales scientifiques de l'\'Ecole normale sup\'erieure}, (2015). \href{https://arxiv.org/abs/1502.04482}{arXiv:1502.04482}

\bibitem{Brown}
Brown, M.: Approximating IMRL distributions by exponential distributions, with applications to first passage times. \emph{Annals of Probability} {\bf 11}, (1983), 419--427. \href{https://projecteuclid.org/euclid.aop/1176993607}{doi:10.1214/aop/1176993607}

\bibitem{CCC}
Chen, Y.-T., Choi, J., and Cox, J. T.: On the convergence of densities of finite voter models to the Wright--Fisher diffusion. \textit{Annales de l'Institut Henri Poincar\'e, Probabilit\'es et Statistiques} {\bf 52}, (2016), 286--322. \href{https://projecteuclid.org/euclid.aihp/1452089270#info}{doi:10.1214/14-AIHP639}

\bibitem{CC}
Chen, Y.-T. and Cox, J. T.: Weak atomic convergence of finite voter models toward Fleming--Viot processes. \textit{Stochastic Processes and their Applications} {\bf 128}, (2016), 2463--2488. \href{https://doi.org/10.1016/j.spa.2017.09.015}{doi:10.1016/j.spa.2017.09.015}

\bibitem{C}
Chen, Y.-T.: Wright--Fisher diffusions in stochastic spatial evolutionary games with death-birth updating. \textit{Annals of Applied Probability} {\bf 28}, (2018), 3418--3490. \href{https://projecteuclid.org/euclid.aoap/1538985626}{doi:10.1214/18-AAP1390}

\bibitem{Chung}
Chung, F. R. K.: \emph{Spectral Graph Theory}. CBMS Regional Conference Series in Mathematics {\bf 92}. American Mathematical Society, 1997. \href{http://dx.doi.org/10.1090/cbms/092}{doi:10.1090/cbms/092} 

\bibitem{Cox_1989}
Cox, J. T.: Coalescing random walks and voter model consensus times on the torus in $\Bbb Z^d$. \textit{Annals of Probability} {\bf 17},  (1989), 1333--1366. \href{https://doi.org/10.1214\%2Faop\%2F1176991158}{doi:10.1214/aop/1176991158}

\bibitem{Friedman_2008}
Friedman, J.: A proof of Alon's second eigenvalue conjecture and related problems. \textit{Memoirs of the American Mathematical Society} {\bf 195}, (2008). \href{http://dx.doi.org/10.1090/memo/0910}{doi:10.1090/memo/0910}

\bibitem{JLT}
Jara, M., Landim, C. and Teixeira, A.: Quenched scaling limits of trap models. \textit{Annals of Probability} {\bf 39}, (2011), 176--223. \href{https://projecteuclid.org/euclid.aop/1291388300}{doi:10.1214/10-AOP554}

\bibitem{LPW}
Levin, D. A., Peres, Y. and Wilmer, E. L.: \textit{Markov Chains and Mixing Times.} American Mathematical Society, 2009. \href{https://doi.org/10.1090\%2Fmbk\%2F058}{doi:10.1090/mbk/058}

\bibitem{McKay}
McKay, B. D.: The expected eigenvalue distribution of a large regular graph. \textit{Linear Algebra and its Applications} {\bf 40}, (1981), 203--216. \href{https://doi.org/10.1016/0024-3795(81)90150-6}{doi:10.1016/0024-3795(81)90150-6}

\bibitem{OHLN}
Ohtsuki, H., Hauert, C., Lieberman, E. and Nowak, M. A.: A simple rule for the evolution of cooperation on graphs and social networks. \textit{Nature} {\bf 441}, (2006), 502--505. \href{http://www.nature.com/nature/journal/v441/n7092/full/nature04605.html}{doi:10.1038/nature04605}

\bibitem{Oliveira}
Oliveira, R. I.: Mean field conditions for coalescing random walks. \textit{Annals of Probability} {\bf 41}, (2013), 3420--3461. \href{http://dx.doi.org/10.1214/12-aop813}{doi:10.1214/12-aop813}

\bibitem{Pittel}
Pittel, B.: Tight Markov chains and random compositions. \emph{Annals of Probability} {\bf 40},  (2012), 1535--1576. \href{https://projecteuclid.org/euclid.aop/1341401143}{doi:10.1214/11-AOP656} 

\bibitem{SR}
Sood, V. and Redner, S.: Voter model on heterogeneous graphs. \emph{Physical Review Letters} {\bf 94},  (2005), 178701. \href{https://doi.org/10.1103/PhysRevLett.94.178701}{doi:10.1103/PhysRevLett.94.178701} 

\bibitem{TVW}
Tripathi, A., Venugopalan, S. and West, D. B.: A short constructive proof of the Erd\H{o}s--Gallai characterization of graphic lists. \emph{Discrete Mathematics} {\bf 310}, (2009), 843--844. \href{https://doi.org/10.1016/j.disc.2009.09.023}{doi:10.1016/j.disc.2009.09.023}

\bibitem{Vallender_1974}
Vallender, S. S.: Calculation of the Wasserstein distance between probability distributions on the line. \textit{Theory of Probability \& Its Applications} {\bf 18}, (1974), 784--786. \href{https://doi.org/10.1137\%2F1118101}{doi:10.1137/1118101}

\bibitem{VE}
Vazquez, F. and Egu\'iluz, V. M: Analytical solution of the voter model on uncorrelated networks. \emph{New Journal of Physics} {\bf 10}, (2008), 063011. \href{https://doi.org/10.1088/1367-2630/10/6/063011}{doi:10.1088/1367-2630/10/6/063011}

\end{thebibliography}
\end{document}